\title{On the monotonicity of the correction term\\
in Ramanujan's factorial approximation}
\author{Mark B. Villarino\\
Daniel Campos Salas\\
Javier Carvajal Rojas\\[6pt]
Escuela de Matem\'atica, Universidad de Costa Rica\\
San Jos\'e 2060, Costa Rica}
\newtheorem{teo}{Theorem}
\newtheorem{propn}[teo]{Proposition}
\newtheorem{corl}[teo]{Corollary}
\newcommand{\bN}{\mathbb{N}}  
\newcommand{\dsp}{\displaystyle} 
\newcommand{\Matha}{\textit{Mathematica}$\textsuperscript{\textregistered}$}
\newcommand{\word}[1]{\quad\text{#1}\quad} 
\begin{document}

\maketitle


\begin{abstract}
We present two new proofs of the monotonicity of the correction term
$\theta_n$ in \textsc{Ramanujan's} refinement of \textsc{Stirling's}
formula.
\end{abstract}


\section{Introduction} 

\textsc{Stirling's} approximation $n! \approx \sqrt{2\pi n}\,(n/e)^n$
is one of the most important results in mathematics. The Indian
mathematician \textsc{Srinivasa Ramanujan} \cite{R} proposed the
following refinement,
\begin{equation}
\boxed{
n! = \sqrt{\pi} \biggl( \dfrac{n}{e} \biggr)^n
\biggl( 8n^3 + 4n^2 + n + \dfrac{\theta_n}{30} \biggr)^{1/6}
}
\label{eq:uno} 
\end{equation}
where $\frac{3}{10} < \theta_n < 1$ and $\theta_n \to 1$ if
$n \to \infty$. In 2001, \textsc{Karatsuba}~\cite{K} proved
Ramanujan's approximation and gave a very complicated proof of the
monotonicity of the correction term, $\theta_n$, for all real
$n \geq 1$. In 2006, \textsc{Hirschhorn}~\cite{Hir} proved a more
exact version of Ramanujan's inequalities for $\theta_n$, but did not
even mention the monotonicity of~$\theta_n$. In this paper we present
two simple proofs of the monotonicity of the sequence
$(\theta_n)_{n\in\bN}$. The first proof follows directly from the
result due to Hirschhorn~\cite{Hir}. The second uses some methods
from Hirschhorn's paper; however, instead of using his double
inequality for $\theta_n$, it only uses a new and simpler lower bound.


\begin{teo} 
\label{th:thetan}
The sequence $(\theta_n)_{n\in\bN}$ is strictly increasing.
\end{teo}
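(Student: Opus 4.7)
The plan is to exploit the explicit recursion forced on $\theta_n$ by the identity $(n+1)!=(n+1)\cdot n!$. Writing \eqref{eq:uno} at both $n$ and $n+1$, taking their ratio, and simplifying, one finds
$$A_{n+1}+\frac{\theta_{n+1}}{30}=\varphi(n)\left(A_n+\frac{\theta_n}{30}\right),$$
where $A_n:=8n^3+4n^2+n$ and $\varphi(n):=\bigl(e/(1+1/n)^n\bigr)^{6}>1$. This expresses $\theta_{n+1}$ as an explicit affine function of $\theta_n$ and reduces the whole problem to a one-variable inequality.

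\textbf{Reduction to a lower bound.} Subtracting $\theta_n/30$ from both sides yields
$$\frac{\theta_{n+1}-\theta_n}{30}=\frac{\theta_n}{30}\bigl(\varphi(n)-1\bigr)+\varphi(n)A_n-A_{n+1},$$
so monotonicity is equivalent to the pointwise inequality
$$\theta_n>\frac{30\bigl(A_{n+1}-\varphi(n)A_n\bigr)}{\varphi(n)-1}=:L(n).$$
I would expand $\varphi(n)=\exp\bigl(6-6n\log(1+1/n)\bigr)$ via the series $n\log(1+1/n)=1-\tfrac{1}{2n}+\tfrac{1}{3n^{2}}-\cdots$ to obtain a clean asymptotic expansion of $L(n)$ in powers of $1/n$. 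The leading behaviour should be $L(n)\to 1$ from below, which is consistent with the known limit $\theta_n\to 1$ and encourages the belief that the inequality is achievable.

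\textbf{Closing the argument.} To finish, one combines the reduction with a sharp lower bound on $\theta_n$. The cleanest route is to quote Hirschhorn's double inequality $h(n)<\theta_n<H(n)$ directly and verify $H(n)\leq h(n+1)$, for then $\theta_n<H(n)\leq h(n+1)<\theta_{n+1}$, bypassing $L(n)$ altogether. The alternative, and what I expect to be the main technical obstacle, is to furnish a self-contained lower bound $\theta_n\geq \ell(n)$ elementary enough to avoid Hirschhorn's full machinery yet tight enough to guarantee $\ell(n)\geq L(n)$ for every $n\in\bN$. After clearing $\varphi(n)-1$, this comparison reduces to the positivity of an explicit polynomial-exponential expression in $n$; the delicate point is to choose $\ell(n)$ so that it is neither so crude that it fails for small $n$ nor so refined that proving it merely re-imports the analytic complexity one is trying to avoid.
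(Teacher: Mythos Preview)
Your outline is correct and coincides with both of the paper's arguments: the ``cleanest route'' via Hirschhorn's bounds $h(n)=1-\tfrac{11}{8n}+\tfrac{5}{8n^2}<\theta_n<1-\tfrac{11}{8n}+\tfrac{11}{8n^2}=H(n)$ together with the interleaving check $H(n)\le h(n+1)\iff 5n^{2}-11n-11\ge 0$ (so $n\ge 3$, with $n=1,2$ handled numerically) is precisely the paper's first proof. Your ``alternative'' is the paper's second proof, which uses exactly the recursion you derived, takes the self-contained lower bound $\ell(n)=1-\tfrac{3}{2n}$, pairs it with an explicit polynomial lower bound for $\varphi^{1/6}$, and thereby reduces $\theta_n>\theta_{n-1}$ to a polynomial positivity valid for $n\ge 106$, the remaining cases being checked by machine.
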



\section{First Proof} 

Hirschhorn~\cite{Hir} establishes that $\theta_n$ satisfies the
inequalities
\[
1 - \frac{11}{8n} + \frac{5}{8n^2} < \theta_n
< 1 - \frac{11}{8n} + \frac{11}{8n^2}.
\]
We will now prove that $\theta_n$ increases with $n$. Define
\begin{align*}
& \alpha_n := 1 - \frac{11}{8n} + \frac{5}{8n^2}
\\[\jot]
& \beta_n := 1 - \frac{11}{8n} + \frac{11}{8n^2}. 
\end{align*}

\begin{propn} 
\label{pr:escalera}
For $n\geq 3$ the inequality $\beta_n \leq \alpha_{n+1}$ is valid.
\end{propn}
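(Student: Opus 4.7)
The plan is a direct algebraic verification. The inequality $\beta_n \le \alpha_{n+1}$ becomes
\[
1 - \frac{11}{8n} + \frac{11}{8n^2} \le 1 - \frac{11}{8(n+1)} + \frac{5}{8(n+1)^2},
\]
so after cancelling the $1$'s and moving every term to one side, it is equivalent to showing
\[
\frac{11}{8(n+1)} - \frac{11}{8n} + \frac{11}{8n^2} - \frac{5}{8(n+1)^2} \ge 0.
\]
I would clear denominators by multiplying through by the positive quantity $8n^2(n+1)^2$, which reduces the problem to a polynomial inequality in $n$.

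After expanding the four resulting terms, the cubic parts cancel and a routine simplification collapses the expression to a quadratic. In fact I expect to find $5n^2 - 11n - 11 \ge 0$ (up to sign, depending on which way one collects). This is the whole content of the lemma: everything else is bookkeeping.

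Finally, I would dispose of the quadratic by an explicit root calculation: $5n^2 - 11n - 11 = 0$ has positive root $(11+\sqrt{341})/10 \approx 2.947$, so the quadratic is positive for every real $n \ge 3$. Alternatively one checks $5\cdot 3^2 - 11\cdot 3 - 11 = 1 > 0$ and notes that the derivative $10n - 11$ is positive for $n \ge 3$, so the quadratic is increasing thereafter.

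The only potential snag is arithmetic slips in the expansion of the cross-terms $-11n(n+1)^2$, $11(n+1)^2$, $11n^2(n+1)$, and $-5n^2$; there is no conceptual obstacle. The restriction $n \ge 3$ is genuinely needed, since the positive root of the quadratic is slightly below $3$, so the inequality fails at $n = 2$.
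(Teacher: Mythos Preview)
Your approach is essentially the same as the paper's: both reduce $\beta_n \le \alpha_{n+1}$ to the quadratic inequality $5n^2 - 11n - 11 \ge 0$ and then check it for $n \ge 3$. One bookkeeping slip: the displayed intermediate inequality should read $\le 0$, not $\ge 0$ (multiplying your four terms by $8n^2(n+1)^2$ yields $-(5n^2-11n-11)$), but since you already flagged the sign ambiguity and landed on the correct final quadratic, the argument goes through.
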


\begin{proof}
Note that
\begin{alignat*}{2}
\beta_n \leq \alpha_{n+1}
&\iff \quad & 1 - \frac{11}{8n} + \frac{11}{8n^2} 
&\leq 1 - \frac{11}{8(n+1)} + \frac{5}{8(n+1)^2}
\\
&\iff & -\frac{11}{n} + \frac{11}{n^2} 
&\leq -\frac{11}{n+1} + \frac{5}{(n+1)^2}
\\
&\iff & -\frac{11}{n(n+1)} + \frac{11}{n^2} &\leq \frac{5}{(n+1)^2}
\\
&\iff & \frac{11}{n^2(n+1)} &\leq \frac{5}{(n+1)^2}
\\
&\iff & 11n + 11 &\leq  5n^2\\
&\iff & 0 &\leq  5n^2 - 11n - 11;
\end{alignat*}
and the last inequality is true for $n\geq 3$; thus, we obtain the
result.
\end{proof}


\begin{proof}[Proof of Theorem~\ref{th:thetan}]
Note that the last result implies that 
$\theta_n < \beta_n \leq \alpha_{n+1} < \theta_{n+1}$, and
consequently $\theta_n < \theta_{n+1}$. So it is enough examine
independently the cases $n = 1$ and $n = 2$; from \eqref{eq:uno} it
follows easily that
\[
\theta_n = 30 \Biggl( \biggl( \frac{n!}{\sqrt{\pi}(n/e)^n} \biggr)^6
- 8n^3 - 4n^2 - n \Biggr).
\]
Hence, evaluating at $n = 1$, $n = 2$ and $n = 3$ directly, we obtain
\begin{align*}
\theta_1 &= 0{,}3359\dots  \\
\theta_2 &= 0{,}5117\dots  \\
\theta_3 &= 0{,}6305\dots
\end{align*}
Therefore, we can conclude that $\theta_1 < \theta_2 < \theta_3$, and
finally prove the theorem.
\end{proof}

\section{Second Proof} 
The following well-known inequalities will be used in the proof. 
\begin{align}
\ln(1 + x)
&\leq x - \frac{x^2}{2} + \frac{x^3}{3} - \frac{x^4}{4} + \frac{x^5}{5},
\label{eq:dos} 
\\
\ln(1 + x) &\leq x - \frac{x^2}{2} +\cdots+ \frac{x^7}{7},
\label{eq:tres} 
\\
\ln(1 + x) &\geq x - \frac{x^2}{2} +\cdots+ \frac{x^7}{7} - \frac{x^8}{8},
\label{eq:cuatro} 
\\
e^x &\geq 1 + x + \frac{x^2}{2} + \frac{x^3}{3} + \frac{x^4}{4!}.
\label{eq:cinco} 
\end{align}
The logarithmic inequalities are valid for $-1 < x \leq 1$ while the
exponential inequality is valid for all real~$x$.

Let $a_n := \dfrac{n!}{\sqrt{n}\,(n/e)^n}$. We first complete the
proof of following inequality proposed by Hirschhorn in~\cite{Hir}.

\begin{propn} 
\label{pr:log-estimado}
Utilizing the previous notation, for all $n \in \bN$ the following
inequality holds:
\begin{equation}
\ln \biggl( \frac{a_n}{a_{n+1}} \biggr)
> \frac{1}{12} \biggl( \frac{1}{n} - \frac{1}{n + 1} \biggr)
- \frac{1}{360} \biggl( \frac{1}{n^3} - \frac{1}{(n + 1)^3} \biggr).
\label{eq:seis} 
\end{equation}
\end{propn}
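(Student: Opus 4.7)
The plan is to rewrite $\ln(a_n/a_{n+1})$ via the substitution $t := 1/(2n+1)$, obtain a polynomial lower bound by combining inequalities \eqref{eq:tres} and \eqref{eq:cuatro}, and finally reduce \eqref{eq:seis} to a simple polynomial inequality in $s := (2n+1)^2$.

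First I would use the definition of $a_n$ to derive the identity $\ln(a_n/a_{n+1}) = (n + 1/2)\ln(1 + 1/n) - 1$. Since $(n+1)/n = (1+t)/(1-t)$ and $0 < t \leq 1/3$ for $n \geq 1$, I would then split $\ln(1+1/n) = \ln(1+t) - \ln(1-t)$ and apply \eqref{eq:cuatro} with $x = t$ to the first logarithm and \eqref{eq:tres} with $x = -t$ to the second, the latter giving $-\ln(1-t) \geq t + t^2/2 + \cdots + t^7/7$. Adding these two bounds makes all even powers of $t$ cancel, yielding
\[
\ln\Bigl(1 + \tfrac{1}{n}\Bigr) \geq 2t + \frac{2t^3}{3} + \frac{2t^5}{5} + \frac{2t^7}{7} - \frac{t^8}{8}.
\]
Multiplying by $(n + 1/2) = 1/(2t)$ and subtracting $1$ then gives
\[
\ln\Bigl(\frac{a_n}{a_{n+1}}\Bigr) \geq \frac{t^2}{3} + \frac{t^4}{5} + \frac{t^6}{7} - \frac{t^7}{16} > \frac{t^2}{3} + \frac{t^4}{5},
\]
the last strict inequality holding since $t \leq 1/3$ forces $t/16 < 1/7$.

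It remains to show that $t^2/3 + t^4/5$ already exceeds the right-hand side of \eqref{eq:seis}. Writing $s = (2n+1)^2$ and using $4n(n+1) = s-1$, routine algebra gives $1/n - 1/(n+1) = 4/(s-1)$ and $1/n^3 - 1/(n+1)^3 = 16(3s+1)/(s-1)^3$, so the right-hand side of \eqref{eq:seis} equals $(15s^2 - 36s + 13)/(45(s-1)^3)$, while the polynomial lower bound equals $(5s+3)/(15s^2)$. Cross-multiplying and simplifying reduces \eqref{eq:seis} to $5s^2 + 12s - 9 > 0$, trivially true for $s \geq 9$.

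The main obstacle is \emph{precision}: a direct application of \eqref{eq:cuatro} to $\ln(1+1/n)$ alone produces a bound that actually falls below $R(n)$ at $n = 1$. The symmetric splitting around $2n+1$ is what kills all even-power terms in $t$ and so produces a bound whose leading part $t^2/3 + t^4/5$ already matches $R(n)$ to sufficient order to dominate it for every $n \geq 1$.
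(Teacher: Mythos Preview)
Your argument is correct and complete. The substitution $t=1/(2n+1)$, the use of \eqref{eq:tres} at $x=-t$ and \eqref{eq:cuatro} at $x=t$, the resulting bound $\ln(a_n/a_{n+1})>t^2/3+t^4/5$, and the reduction of \eqref{eq:seis} to $5s^2+12s-9>0$ with $s=(2n+1)^2$ all check out line by line.

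Your route is genuinely different from the paper's. The paper keeps the asymmetric variable $u=1/n$, clears denominators to obtain the inequality
\[
180(1+u)^3(2+u)\ln(1+u) > 360u + 1080u^2 + 1110u^3 + 420u^4 + 27u^5 - 3u^6 - u^7,
\]
applies \eqref{eq:cuatro} directly to $\ln(1+u)$, and uses \Matha{} to reduce the difference to $\frac{u^7}{14}(20-561u^2-1455u^3-1215u^4-315u^5)$, which is positive only for $u\le 1/7$; the cases $n=1,\dots,6$ are then verified numerically. Your symmetric splitting around $2n+1$ makes the even powers of $t$ cancel, so the surviving bound $t^2/3+t^4/5$ already matches the right side to high enough order that the residual inequality $5s^2+12s-9>0$ holds for every $n\ge 1$. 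In short, the paper's approach is a direct Taylor bound plus symbolic computation plus case-checking; yours trades that for a slightly cleverer change of variable that eliminates both the computer algebra and the small-$n$ verifications.
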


\begin{proof}
First note that
$\ln(a_n/a_{n+1}) = (n + \frac{1}{2}) \ln(1 + \frac{1}{n}) - 1$.
Taking $u := \frac{1}{n}$, it follows that the above inequality is
equivalent to
\[
\ln(1 + u)
> \frac{360u + 1080u^2 + 1110u^3 + 420u^4 + 27u^5 - 3u^6 - u^7}
       {180 (1 + u)^3 (2 + u)},
\]
or to
\[
180 (1 + u)^3 (2 + u) \ln(1 + u)
> 360u + 1080u^2 + 1110u^3 + 420u^4 + 27u^5 - 3u^6 - u^7.
\]

Using~\eqref{eq:cuatro} and~\Matha{} we obtain that
\begin{align*}
180 (1 + u)^3 &(2 + u) \ln(1 + u)
- (360u + 1080u^2 + 1110u^3 + 420u^4 + 27u^5 - 3u^6 - u^7)
\\
&\geq \frac{10u^7}{7} - \frac{561u^9}{14} - \frac{1455u^{10}}{14}
- \frac{1215u^{11}}{14} - \frac{45u^{12}}{2}
\\
&= \frac{u^7}{14} (20 - 561u^2 - 1455u^3 - 1215u^4 - 315u^5).
\end{align*}

The last term in parentheses is decreasing with respect to~$u$, and
direct computation for $u = \frac{1}{7}$ shows that its value is
about~$3.78$. Therefore, the original inequality is true for
$n \geq 7$. The cases $n = 1$ through $n = 6$ follow by direct
computation and this concludes the proof.
\end{proof}


{}From~\eqref{eq:seis} and the fact that
$a_\infty = \lim_{n\to\infty} a_n = \sqrt{2\pi}$, we deduce the
following corollary.

\begin{corl} 
\label{cr:an6-estimado}
If $n$ is a positive integer, then the following inequalities hold:
\[
a_n \geq a_\infty \exp\biggl( \frac{1}{12n} - \frac{1}{360n^3} \biggr)
= \sqrt{2\pi} \exp\biggl( \frac{1}{12n} - \frac{1}{360n^3} \biggr);
\]
and
\begin{equation}
a_n^6 \geq 8\pi^3 \exp\biggl( \frac{1}{2n} - \frac{1}{60n^3} \biggr).
\label{eq:siete} 
\end{equation}
\end{corl}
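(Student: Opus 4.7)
The corollary is a purely telescopic consequence of Proposition~\ref{pr:log-estimado}. My approach is to sum the strict inequality \eqref{eq:seis} over $k = n, n+1, \dots, N-1$ (for arbitrary $N > n$), exploit the fact that both sides are differences, and then pass to the limit $N \to \infty$ using the hypothesis $a_\infty = \sqrt{2\pi}$ (which is the classical Stirling formula).

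Concretely, first I would rewrite \eqref{eq:seis} at index $k$ and sum:
\[
\sum_{k=n}^{N-1} \ln\!\biggl(\frac{a_k}{a_{k+1}}\biggr)
> \sum_{k=n}^{N-1} \Biggl[ \frac{1}{12}\biggl(\frac{1}{k}-\frac{1}{k+1}\biggr)
- \frac{1}{360}\biggl(\frac{1}{k^3}-\frac{1}{(k+1)^3}\biggr) \Biggr].
\]
Both sides telescope: the left collapses to $\ln(a_n/a_N)$ and the right to $\frac{1}{12}\bigl(\frac{1}{n}-\frac{1}{N}\bigr) - \frac{1}{360}\bigl(\frac{1}{n^3}-\frac{1}{N^3}\bigr)$. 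Letting $N \to \infty$, the terms $1/N$ and $1/N^{3}$ vanish and $a_N \to \sqrt{2\pi}$, so one obtains
\[
\ln\!\biggl(\frac{a_n}{\sqrt{2\pi}}\biggr) \geq \frac{1}{12n} - \frac{1}{360n^3},
\]
and exponentiating gives the first stated inequality. Raising to the sixth power, and using $(\sqrt{2\pi})^6 = 8\pi^3$ together with $6\cdot\frac{1}{12n}=\frac{1}{2n}$ and $6\cdot\frac{1}{360n^3}=\frac{1}{60n^3}$, yields \eqref{eq:siete} at once.

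There is essentially no obstacle here beyond bookkeeping; the only subtlety worth flagging is that the strict inequality $>$ in \eqref{eq:seis} becomes a non-strict $\geq$ after passing to the limit, which is why the corollary is stated with~$\geq$. Since Proposition~\ref{pr:log-estimado} has already been proved and the limit $a_\infty = \sqrt{2\pi}$ is Stirling's theorem (quoted in the introduction), this completes the argument.
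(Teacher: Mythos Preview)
Your proposal is correct and is exactly the argument the paper has in mind: the corollary is stated immediately after Proposition~\ref{pr:log-estimado} with only the remark that it follows ``from~\eqref{eq:seis} and the fact that $a_\infty=\lim_{n\to\infty}a_n=\sqrt{2\pi}$,'' and your telescoping sum plus limit passage is precisely how one cashes that out. Your observation that the strict inequality weakens to $\geq$ in the limit, and the arithmetic $(\sqrt{2\pi})^6=8\pi^3$, $6\cdot\tfrac{1}{12n}=\tfrac{1}{2n}$, $6\cdot\tfrac{1}{360n^3}=\tfrac{1}{60n^3}$, are all in order.
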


Using this result, we now prove the following proposition.

\begin{propn} 
\label{pr:raman-stirling}
If $n$ is a positive integer, then the following inequality is valid:
\begin{equation}
\biggl( \frac{n!}{\sqrt{\pi}(n/e)^n} \biggr)^6 - 8n^3 - 4n^2 - n
\geq \frac{1 - \frac{3}{2n}}{30}.
\label{eq:ocho} 
\end{equation}
\end{propn}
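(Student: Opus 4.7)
The natural strategy is to combine Corollary~\ref{cr:an6-estimado} with the exponential inequality~\eqref{eq:cinco}. Since $a_n = n!/(\sqrt{n}\,(n/e)^n)$, one has the identity
\[
\biggl( \frac{n!}{\sqrt{\pi}(n/e)^n} \biggr)^6 = \frac{n^3 a_n^6}{\pi^3},
\]
so the bound~\eqref{eq:siete} immediately gives
\[
\biggl( \frac{n!}{\sqrt{\pi}(n/e)^n} \biggr)^6 \geq 8n^3 \exp\biggl( \frac{1}{2n} - \frac{1}{60n^3} \biggr).
\]
The proposition is therefore reduced to the purely analytic inequality
\[
8n^3 \exp\biggl( \frac{1}{2n} - \frac{1}{60n^3} \biggr) \geq 8n^3 + 4n^2 + n + \frac{1 - 3/(2n)}{30}.
\]

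To handle this, I would set $x := \frac{1}{2n} - \frac{1}{60n^3}$, which is positive for every $n \geq 1$, and apply the exponential inequality~\eqref{eq:cinco} to replace $\exp(x)$ by its Taylor polynomial $1 + x + x^2/2 + x^3/6 + x^4/24$. Substituting back and collecting terms (straightforward but tedious, best delegated to \Matha), the left-hand side expands as
\[
4n^2 + n + \frac{1}{30} - \frac{11}{240n} - \frac{1}{60n^2} + O\bigl(n^{-3}\bigr),
\]
while the target right-hand side equals
\[
4n^2 + n + \frac{1}{30} - \frac{12}{240n}.
\]
So everything boils down to showing that the margin $\frac{1}{240n}$ at order $1/n$ dominates the remaining negative contributions of order $1/n^2$ and below, which is an elementary polynomial inequality in $n$.

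The main obstacle is that this margin is genuinely tight: comparing $\frac{1}{240n}$ against $\frac{1}{60n^2}$ alone already requires $n \geq 4$, and the higher-order corrections tighten the threshold further. Consequently, the analytic argument will establish the inequality only for $n$ beyond some modest integer $N$, and the finitely many small cases $n = 1, 2, \dots, N-1$ must be dispatched by direct numerical evaluation, exactly as was done for $n = 1, 2, 3$ in the First Proof. If the $x^4/24$ truncation proves too weak for some of these intermediate cases, one can simply retain the further term $x^5/120$ in the expansion of $\exp(x)$, which is again a valid lower bound since $x > 0$; this tightens the $1/n^2$ coefficient and brings $N$ down to a small value that can be checked by hand.
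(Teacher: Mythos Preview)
Your proposal is correct and follows essentially the same route as the paper: reduce via Corollary~\ref{cr:an6-estimado} to the analytic inequality for $8n^3\exp(\tfrac{1}{2n}-\tfrac{1}{60n^3})$, apply the fourth-order exponential bound~\eqref{eq:cinco}, and obtain a polynomial inequality in $1/n$ valid for large $n$, finishing small cases numerically. The paper carries out exactly this computation (with $u=1/n$ rather than your $x$) and finds the threshold $n\ge 5$, so no fifth-order term is needed.
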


\begin{proof}
{}From~\eqref{eq:siete}, it is enough to prove that
\[
8n^3 \exp\biggl( \frac{1}{2n} - \frac{1}{60n^3} \biggr)
- 8n^3 - 4n^2 - n - \frac{1}{30} + \frac{1}{20n} \geq 0.
\]
Let $u := \frac{1}{n}$. Using \eqref{eq:cinco} and~\Matha{} we obtain
\begin{align*}
\exp \biggl( \frac{u}{2} - \frac{u^3}{60} \biggr)
&\geq 1 + \frac{u}{2} + \frac{u^2}{8} + \frac{u^3}{240}
- \frac{11u^4}{1920} - \frac{u^5}{480} - \frac{u^6}{4800}
+ \frac{u^7}{14400}
\\
&\qquad + \frac{u^8}{57600} - \frac{u^9}{1296000}
- \frac{u^{10}}{2592000} + \frac{u^{12}}{311040000}
\\
&\geq 1 + \frac{u}{2} + \frac{u^2}{8} + \frac{u^3}{240}
- \frac{11u^4}{1920} - \frac{u^5}{480} - \frac{u^6}{4800},
\end{align*}
since
\[
\frac{u^7}{14400} - \frac{u^9}{1296000} > 0   \word{and}
\frac{u^8}{57600} - \frac{u^10}{2592000} > 0  \word{for}  u \leq 1.
\]
Considering the last inequality multiplied by $8/u^3$, the expression
$\dsp \frac{8}{u^3} + \frac{4}{u^2} + \frac{1}{u} + \frac{1}{30}$ is
canceled, so we obtain that
\begin{align*}
\frac{8}{u^3} \exp\biggl( \frac{u}{2} - \frac{u^3}{60} \biggr)
- \frac{8}{u^3} - \frac{4}{u^2} - \frac{1}{u} - \frac{1}{30}
+ \frac{u}{20}
&\geq \frac{u}{240} - \frac{u^2}{60} - \frac{u^3}{600}
\\
&= \frac{u}{1200} (5 - 20u - 2u^2).
\end{align*}
The last term in brackets is decreasing with respect to~$u$, and by a
direct computation we see that its value for $u = \frac{1}{5}$ is
about $0.92$. Hence the original inequality is true for $n \geq 5$.
The cases $n = 1$ through $n = 4$ follow by direct computation, thus
we conclude the proof.
\end{proof}

Note that previous result establishes that
\[
\boxed{
\theta_n \geq 1 - \frac{3}{2n}
}
\]
which is a weaker bound than that obtained by Hirschhorn~\cite{Hir}.
Nevertheless it, alone, suffices to fully prove the monotonicity of
$\theta_n$, and \emph{this is the novelty in our second proof}. Before
presenting the main result, we introduce an inequality based on a
powers series calculated using~\Matha.


\begin{propn} 
\label{pr:cota-inferior}
If $n$ is a positive integer, $n > 1$, then the following inequality 
holds:
\begin{equation}
\frac{e(n - 1)^{n-1}}{n^{n-1}}
\geq 1 + \frac{1}{2n} + \frac{7}{24n^2} + \frac{3}{16n^3}
+ \frac{743}{5760n^4} + \frac{215}{2304n^5}.
\label{eq:nueve} 
\end{equation}
\end{propn}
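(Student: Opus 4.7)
The plan is to pass to the variable $u := 1/n$ and rewrite the left-hand side of~\eqref{eq:nueve} as $e(1-u)^{1/u - 1} = \exp\bigl[1 + (1/u - 1)\ln(1-u)\bigr]$. Using the Maclaurin expansion $\ln(1-u) = -\sum_{k\geq 1} u^k/k$ and a short telescoping, one obtains the clean identity
\[
1 + \biggl(\frac{1}{u} - 1\biggr)\ln(1-u) \;=\; \sum_{k=1}^{\infty} \frac{u^k}{k(k+1)},
\]
valid for $0 < u < 1$. The crucial feature is that every Taylor coefficient on the right is strictly positive.

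Let $f(u) := \sum_{k \geq 1} u^k/(k(k+1))$ and let $P(u)$ denote the degree-five polynomial on the right-hand side of~\eqref{eq:nueve}. I would then compare $e^{f(u)}$ with $P(u)$ via the observation that composition preserves nonnegativity of power-series coefficients: because $\exp(z) = \sum z^k/k!$ has nonnegative Maclaurin coefficients, because $f$ has nonnegative coefficients with $f(0) = 0$, and because $f(u)^k = O(u^k)$ ensures the formal composition is well-defined, the expansion $e^{f(u)} = \sum_{k \geq 0} a_k u^k$ has every $a_k \geq 0$ and radius of convergence at least $1$ (inherited from $f$). Hence for $0 < u < 1$ every term is nonnegative, and truncating at degree~$5$ yields a valid lower bound $e^{f(u)} \geq \sum_{k=0}^{5} a_k u^k$. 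The hypothesis $n > 1$ gives $u \leq 1/2$, so convergence is not at issue.

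It remains to verify that $a_0,\ldots,a_5$ are exactly the coefficients appearing on the right-hand side of~\eqref{eq:nueve}. This is the routine but arithmetic-heavy \Matha{} step flagged by the authors: expand $1 + f + f^2/2 + \cdots + f^5/120$ (higher powers of $f$ contribute only to $u^{\geq 6}$) and collect terms up through $u^5$. One recovers $1,\tfrac12,\tfrac{7}{24},\tfrac{3}{16},\tfrac{743}{5760},\tfrac{215}{2304}$, matching $P(u)$ term by term, and~\eqref{eq:nueve} follows. The main obstacle is really just the bookkeeping in these five rational coefficient computations; conceptually, the composition-of-positive-series argument bypasses the usual interplay between~\eqref{eq:cuatro} and~\eqref{eq:cinco} exploited in the preceding propositions, since the entire infinite tail is handled in a single stroke by positivity.
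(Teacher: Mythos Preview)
Your argument is correct and takes a genuinely different route from the paper's. The paper proceeds by taking logarithms and bounding the two resulting $\ln(1+x)$ terms from above via the Taylor-polynomial inequalities \eqref{eq:tres} and~\eqref{eq:dos}; this yields a rational expression of the form $u^6 P(u)/\bigl(K(1-u)^6\bigr)$ whose positivity is then established only for $u \le 1/8$, leaving the cases $n=2,\dots,7$ to direct computation. Your approach sidesteps all of this: the closed-form identity $1+(1/u-1)\ln(1-u)=\sum_{k\ge 1} u^k/(k(k+1))$ exhibits the exponent as a power series with \emph{all} coefficients positive, so the composition $e^{f(u)}$ inherits nonnegative Taylor coefficients, and truncation at degree five is automatically a lower bound for every $0<u<1$. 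What you gain is that no residual small-$n$ cases remain and none of the auxiliary inequalities \eqref{eq:dos}--\eqref{eq:cinco} are needed; what the paper's approach buys is methodological uniformity with the surrounding propositions, all of which follow the same ``replace transcendentals by Taylor polynomials and analyze the remainder'' template. Your observation also explains \emph{why} the particular coefficients $1,\tfrac12,\tfrac{7}{24},\tfrac{3}{16},\tfrac{743}{5760},\tfrac{215}{2304}$ arise: they are precisely the degree-five Taylor truncation of the left-hand side, which the paper treats as input from \Matha{} rather than as the conceptual starting point.
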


\begin{proof}
Let $u := \frac{1}{n}$. First note that the inequality is equivalent
to
\[
1 + \frac{1 - u}{u} \ln(1 - u) 
\geq \ln\biggl( 1 + \frac{u}{2} + \frac{7u^2}{24} + \frac{3u^3}{16}
+ \frac{743u^4}{5760} + \frac{215u^5}{2304} \biggr),
\]
or the following inequality:
\[
1 \geq \frac{1 - u}{u} \ln\biggl( 1 + \frac{u}{1 - u} \biggr)
+ \ln\biggl( 1 + \frac{u}{2} + \frac{7u^2}{24} + \frac{3u^3}{16}
+ \frac{743u^4}{5760} + \frac{215u^5}{2304} \biggr).
\]

The inequalities \eqref{eq:tres} and~\eqref{eq:dos} for the first and
second term respectively show, using \Matha, that the right hand side
is an expression of the form $\dfrac{u^6 P(u)}{K(1 - u)^6}$, where $P$
is a polynomial whose constant term is positive. Let $Q$ be the
polynomial that consists of the constant term of~$P$ and all the terms
of~$P$ with negative coefficients. It is clear that $P(u) \geq Q(u)$
and that $Q$ is a decreasing polynomial on the real positive numbers.
{}From direct computation we obtain $Q(\frac{1}{8}) \doteq 0.00036$.
This implies that the original inequality holds at least for
$n \geq 8$. The cases $n = 1$ through $n = 7$ follow by direct
computation, and this completes the proof.
\end{proof}

The previous result allows us to give another proof of
Theorem~\ref{th:thetan}.


\begin{proof}[Second proof of Theorem~\ref{th:thetan}]
The difference
\begin{align*}
\theta_n - \theta_{n-1}
&= 30 \Biggl( \biggl( \frac{n!}{\sqrt{\pi}(n/e)^n} \biggr)^6
- \biggl( \frac{(n-1)!}{\sqrt{\pi}((n - 1)/e)^{n-1}} \biggr)^6
- (8(3n^2 - 3n + 1) + 4(2n - 1) + 1) \Biggr)
\\
&= 30 \Biggl(
\biggl( \frac{(n-1)!}{\sqrt{\pi}((n - 1)/e)^{n-1}} \biggr)^6
\biggl( \biggl( \frac{e(n - 1)^{n-1}}{n^{n-1}} \biggr)^6 - 1 \biggr)
- 24n^2 + 16n - 5 \Biggr).
\end{align*}

The inequalities \eqref{eq:ocho} and~\eqref{eq:nueve} show,
using~\Matha, that the right hand side is an expression of the form
$\dfrac{P(n)}{Kn^{30}(n - 1)}$, where $P$ is a polynomial whose
leading coefficient is positive. Let $Q$ be the polynomial that
consists of the leading coefficient of~$P$ and all the terms of~$P$
with negative coefficients. Then $Q(n) = n^{29} R(\frac{1}{n})$ where
$R$ is a polynomial increasing with respect to~$n$, and whose constant
term is positive. In addition, a direct computation shows that
$R(\frac{1}{106})/K \doteq 0.00023$, and so the result is valid for
$n \geq 106$. The remaining cases follow by direct computation, thus
completing the proof of our assertion.
\end{proof}

\section*{Acknowledgements}

We would like to thank our friend Joseph C. V\'arilly, who read a
draft of the article and offered helpful suggestions.




\begin{thebibliography}{3}

\bibitem{Hir}
\textsc{M. D. Hirschhorn},
``A new version of Stirling's formula'',
Mathl. Gazette \textbf{90} (2006), 286--291.

\bibitem{K}
\textsc{E. A. Karatsuba},
``On the asymptotic representation of the Euler gamma function by
Ramanujan''.
J. Comput. Appl. Math. \textbf{135} (2001), 225--240.

\bibitem{R}
\textsc{S. Ramanujan},
\textit{The Lost Notebook and other Unpublished Papers},
Narosa, New Delhi, 1987; p.~339.

\end{thebibliography}
\end{document}